\newcommand{\be}{\begin{equation}}
\newcommand{\ee}{\end{equation}}
\theoremstyle{definition}
\theoremstyle{remark}
\theoremstyle{conjecture}
\newtheorem{lem}{Lemma}
\newtheorem{thm}{Theorem}
\newtheorem{pro}{Proposition}
\numberwithin{equation}{section}
\begin{document}

\title{Twisted quantum toroidal algebras $T_q^-(\mathfrak g)$}
\author{Naihuan Jing, Rongjia Liu$^*$}
\address{Department of Mathematics, North Carolina State University, Raleigh, NC 27695, USA
and School of Sciences, South China University of Technology,
Guangzhou 510640, China}
\email{jing@math.ncsu.edu}
\address{School of Sciences, South China University of Technology,
Guangzhou 510640, China}
\email{liu.rongjia@mail.scut.edu.cn}

\thanks{{\scriptsize
\hskip -0.4 true cm MSC (2010): Primary: 17B65; Secondary: 17B67, 17B69.
\newline Keywords: Quantum algebras, toroidal algebras, vertex operators, Serre relations.\\
$*$Corresponding author.
}}

\maketitle

\begin{abstract} We construct a principally graded quantum loop algebra
for the Kac-Moody algebra. As a special case a twisted analog of the quantum toroidal algebra
is obtained together with the quantum Serre relations.

\end{abstract}

\section{Introduction}
Toroidal algebras are an important class of algebras in the theory of the extended affine Lie algebras. The first realization of a toroidal algebra appeared as
a vertex representation of the affinized Kac-Moody algebra \cite{F}. The
loop algebraic presentation of toroidal Lie algebras was given by Moody et al.~\cite{MRY},
which shows the similarity with the affine Kac-Moody algebras.
Quantum toroidal algebras appeared in the study of the Langlands reciprocity for algebraic surfaces \cite{GKV}
and was constructed geometrically by Nakajima's quiver varieties \cite{N}. More general quantum toroidal
 Kac-Moody algebras were constructed in \cite{J2} using Drinfeld presentation and vertex representations
 where the quantum Serre relations were found to be closely connected with some nontrivial relations
 of Hall-Littlewood symmetric functions.
The quantum toroidal algebras have been studied
in various contexts: toroidal Schur-Weyl duality
 \cite{VV}, its general representation theory \cite{M},
 vertex representations \cite{S}, McKay correspondence
 \cite{FJW}, toroidal actions on level one
representations \cite{STU}, higher level analogs for quantum affine algebras \cite{TU},
fusion products \cite{H1} and an excellent survey can be found in \cite{H2}.
Recently quantum toroidal algebras have found more interesting rich structures and applications
in \cite{FJM1, FJM2, FJM3}.

Like the theory of affine algebras, quantum toroidal algebras also have twisted analogs. Based on the quantum
general linear algebra action on
the quantum toroidal algebra \cite{GJ}, a quantum Tits-Kantor-Kocher (TKK) algebra was constructed \cite{GJ2} using homogeneous $q$-deformed vertex operators in connection with special unitary Lie
algebras, where the Serre relation was also found to be equivalent to
some combinatorial identity of Hall-Littlewood polynomials.
More recently, a new twisted quantum toroidal algebra of type $A_1$ has also been constructed
as an analog of the quantum TKK algebra \cite{JL}.
On the other hand, a principal quantum affine algebra was constructed \cite{J3}
by deforming the Kac-Moody algebras with an involution. However that algebra is larger than its classical analog
as the Serre relations were not known. In the original classical situation the principal realization of affine Lie
algebras \cite{LW} has played an important role in the theory of vertex operator algebra \cite{FLM}.
In light of quantum homogeneous constructions \cite{FJ, J1}, the quantum principal realization was also expected.
In this paper, we construct a principally graded quantum Kac-Moody algebras and obtain
the associated Serre relations, which will guarantee the finiteness of the representation theory.

The content is organized as follows. Section 2 prepares some background information  regarding twisted quantum toroidal algebras. Section three introduces a twisted quantum Heisenberg algebra associated to the root lattice of the Kac-Moody algebra and then its Fock space representation is constructed. After that, Section four is concerned with proving
the quantum Serre relations of $(-1)$-twisted quantum toroidal algebras.
 \\[0.1cm]

\section{$(-1)$-Twisted quantum toroidal algebras}
We start by recalling the construction
of the principally graded quantum algebras constructed in \cite{J2} and
then state the newly found Serre relations. Let $\mathfrak g$ be the complex finite dimensional simple Lie algebra of a simply laced type. Let $\alpha_{1}, \cdots,\alpha_{l}$ be the simple roots. The normalized invariant bilinear form $(\cdot|\cdot)$ on $\mathfrak g(A)$ satisfies the property that $(\alpha_{i}|\alpha_{j})=a_{ij}$, where $A=(a_{ij})$
is the associated Cartan matrix of $\mathfrak g$.

The twisted quantum toroidal algebra $T_q^-({\mathfrak g})$ is the complex unital associative algebra generated by
\begin{equation*}
 q^{c},\quad h_{im},% ~h_{0m},~
\quad x_{in}^{\pm}, ~m\in2\mathbb{Z}+1, ~n\in \mathbb{Z},
\quad i=0,\ldots, l,
\end{equation*}
subject to the relations written in terms of generating series.

Let
\begin{equation}
 x_i^{\pm}(z)=\sum_{n\in \mathbb{Z}}x_{in}^{\pm} z^{-n},
\end{equation}
\begin{equation}
\phi_{i}(z)=\exp\big\{(q^{-1}-q) 2\sum_{m\in2\mathbb{N}-1}^{\infty} h_{i, -m}z^{m}\big\}=\sum_{n\geqslant 0}\phi_{i,-n}z^{n},
\end{equation}
\begin{equation}
\psi_{i}(z)=\exp\big\{(q- q^{-1}) 2\sum_{m\in2\mathbb{N}-1}^{\infty}h_{i, m}z^{-m}\big\}=\sum_{n\geqslant 0}\psi_{i,n}z^{-n},
\end{equation}
where $\mathbb{N}$ is the set of natural numbers $1, 2, \cdots$. Then the relations of the twisted toroidal algebra are the following:
\begin{equation}
[q^c, \phi_{i}(z)]=[q^c, \psi_{i}(z)]=[q^c, x_{i}^{\pm}(z)]=0,
\end{equation}
\begin{equation}
[x_{i}^{\pm}(z), x_{j}^{\pm}(w)]= [x_{i}^{+}(z), x_{j}^{-}(w)]=0, ~~{if}~(\alpha_{i}|\alpha_{j})=0,
\end{equation}
\begin{equation}
(z+w)[x_{i}^{+}(z), x_{j}^{-}(w)]=0, ~~{if}~(\alpha_{i}|\alpha_{j})=-1,
\end{equation}
\begin{equation}
[x_{i}^{+}(z), x_{i}^{-}(w)]= \frac{2(q+q^{-1})}{q-q^{-1}}\Big(\psi_{i}(q^{-c/2}z)\delta(\frac{w}{z}q^{c})
-\phi_{i}(q^{c/2}z)\delta(\frac{w}{z}q^{-c})\Big).
\end{equation}
The Serre relations are given as follows:
 \begin{eqnarray}
\begin{split}
Sym_{z_{1},z_{2}}\Big\{(z_{1}+q^{\mp2}z_{2})(z_{2}-q^{\mp2}z_{1})\Big(z_{2}x_i ^{\pm}(z_1 )x_i ^{\pm}(z_2 )x_j^{\pm} (w)-(z_{1}+z_{2})\\\cdot x_i ^{\pm}(z_1 )x_j^{\pm} (w)x_i ^{\pm}(z_2 )+z_{1}x_j^{\pm} (w)x_i ^{\pm}(z_1 )x_i ^{\pm}(z_2 )\Big)\Big\}=0, \quad if ~(\alpha_{i}|\alpha_{j})= -1,
\end{split}
\end{eqnarray}
\begin{eqnarray}
\begin{split}
\sum_{r=0,\sigma \in \mathfrak{S}_{k+1}}^{k+1}
\sigma .\Big(\prod_{m<n}(z_{m}+q^{\mp2}z_{n})(z_{n}-q^{\mp2}z_{m})x_i ^{\pm} (z_1 )x_i ^{\pm}(z_2 )
\cdots x_i ^{\pm}(z_{r})\\ \cdot x_j^{\pm} (w)x_i ^{\pm}(z_{r+1})\cdots x_i ^{\pm}(z_{k+1} )\Big)=0,~if~(\alpha_{i}|\alpha_{j})= -k, ~k\in \mathbb N,~k\geqslant2.
\end{split}
\end{eqnarray}
 Let
 \begin{equation*}
 \displaystyle G_{ij}(x)=\sum_{n=0}^{\infty}G_{n}x^n
 \end{equation*}
 be the Taylor series at $x=0$ of the following functions
 \begin{align}
 G_{ij}(x) &=\frac{q^{(\alpha_{i}|\alpha_{j})}x-1}{q^{(\alpha_{i}|\alpha_{j})}x+1}\frac{x+q^{(\alpha_{i}|\alpha_{j})}}{x-q^{(\alpha_{i}|\alpha_{j})}},\nonumber
\end{align}
 then the relations  of the twisted quantum {toroidal} algebra are expressed in terms of generating series:
\begin{eqnarray}
\phi_{i}(z)\psi_{j}(w)=\psi_{j}(w)\phi_{i}(z)G_{ij}(q^{-c}\frac{z}{w})/G_{ij}(q^{c}\frac{z}{w}),
\end{eqnarray}
\begin{equation}
[\phi_{i}(z),\phi_{j}(w)]=[\psi_{i}(z),\psi_{j}(w)]=0,
\end{equation}
\begin{align}\label{commutator1}
\phi_i(z)x_j^{\pm}(w)\phi_i(z)^{-1}&=x_j^{\pm}(w)G_{ij}(\frac{z}{w}q^{\mp c/2})^{\pm 1},\\ \label{commutator2}
\psi_i(z)x_j^{\pm}(w)\psi_i(z)^{-1}&=x_j^{\pm}(w)G_{ij}(\frac{w}{z}q^{\mp c/2})^{\pm 1},
\end{align}
where $\delta(z)=\sum_{n\in \mathbb{Z}}z^{n}$ is the formal $\delta$-function.\\[-0.1cm]

\section{Fock space representations}

The twisted quantum Heisenberg algebra $U_{q}(\widetilde{h})$ is the associative algebra generated by $a_{i}(m)$ $(m\in  2\mathbb{Z}+1)$ and the central element $\gamma=q^c$ subject to the following relations:
\begin{equation}
[a_{i}(m),a_{j}(n)]=\delta_{m,-n}\frac{[(\alpha_{i}|\alpha_{j})m]}{2m}\frac{\gamma^{m}-\gamma^{-m}}{q-q^{-1}},
\end{equation}
\begin{equation}
[a_{i}(m),\gamma]=[a_{i}(m),\gamma^{-1}]=0.
\end{equation}

Let $Q$ be the root lattice of the simply finite dimensional Lie algebra $\mathfrak g$ of the simply laced type with the standard bilinear form given by $(\alpha_{i}|\alpha_{j})=a_{ij}$.  Then let $\hat{Q}$ be the central extension of the root lattice $Q$ such that
\begin{equation}
1\longrightarrow \mathbb{Z}_{2} \longrightarrow \hat{Q}\longrightarrow Q\longrightarrow 1
\end{equation}
with the commutator
\begin{equation}
aba^{-1}b^{-1}=(-1)^{(\alpha|\beta)},
\end{equation}
where $a$ and $b$ are the preimages of $\alpha$ and $\beta$ respectively.

Let
\begin{equation}
S(\mathcal{H^{-}})=\mathbb{C}\left[a_{i}(n):1 \leqslant i \leqslant {l}, n\in -(2\mathbb{N}-1)\right]
\end{equation}
be the symmetric algebra generated by $a_{i}(n),  1 \leqslant i \leqslant {l},  n \in -(2\mathbb{N}-1)$.
Then $S(\mathcal{H^{-}})$ is an
$\mathcal{H}$-module under the action that
$a_{i}(n)$ acts as a differential operator for $n \in 2\mathbb{N}-1$,
and $a_{i}(n)$ acts as a multiplication operator for $n \in -(2\mathbb{N}-1)$.

Denote the preimage of $\alpha_{i}$ by $a_{i}$, and let $T$ be the $\hat{Q}$-module such that
\begin{equation}
a_{i}a_{j}=(-1)^{(\alpha_{i}|\alpha_{j})}a_{j}a_{i},
\end{equation}
the Fock space is defined as
\begin{equation}
V_{Q}=S(\mathcal{H}^{-})\otimes T.
\end{equation}

Introduce the twisted vertex operators acting on $V_{Q}$ as follows:
\begin{equation}
E_{-}^{\pm}(\alpha_{i},z)=\exp\Big(\pm\sum_{n=1, odd}^{\infty}\frac{2q^{\mp n/2}}{[n]}a_{i}(-n)z^{n}\Big),
\end{equation}
\begin{equation}
E_{+}^{\pm}(\alpha_{i},z)=\exp\Big(\mp\sum_{n=1, odd}^{\infty}\frac{2q^{\mp n/2}}{[n]}a_{i}(n)z^{-n}\Big),
\end{equation}
\begin{equation}
X_{i}^{\pm}(z)=E_{-}^{\pm}(\alpha_{i},z)E_{+}^{\pm}(\alpha_{i},z)a_{i}^{\pm1}=\sum_{n\in \mathbb{Z}}X_{i}^{\pm}(n)z^{-n}.
\end{equation}
Then we have the following result.
\begin{thm}
The space $F$ is a level one module for
the twisted quantum toroidal algebra $T_q^-({\mathfrak g})$ under the action defined by $\gamma\mapsto q$, $h_{im}~\mapsto$ $a_{i}(m)$, and $x^{\pm}_{i,n}$ $\mapsto$ $X_{i}^{\pm}(n)$.
\end{thm}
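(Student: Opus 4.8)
The plan is to verify the defining relations of $T_q^-(\mathfrak g)$ one by one on the Fock space $V_Q = S(\mathcal H^-)\otimes T$, exploiting the standard vertex-operator machinery adapted to the principal (odd-mode) grading. First I would record the basic normal-ordering identities. Since the operators $E_\pm^\pm(\alpha_i,z)$ are exponentials in the Heisenberg generators $a_i(\mp n)$ with $n$ odd, the commutator $[a_i(m),a_j(n)] = \delta_{m,-n}\tfrac{[(\alpha_i|\alpha_j)m]}{2m}\tfrac{\gamma^m-\gamma^{-m}}{q-q^{-1}}$ together with the Baker--Campbell--Hausdorff formula gives, after setting $\gamma = q$, contraction functions of the shape
\begin{equation*}
E_+^{\epsilon}(\alpha_i,z)E_-^{\epsilon'}(\alpha_j,w) = E_-^{\epsilon'}(\alpha_j,w)E_+^{\epsilon}(\alpha_i,z)\,\exp\Big(\mp\,\epsilon\epsilon'\sum_{n\ \mathrm{odd}}\tfrac{4q^{\mp\epsilon n/2\mp\epsilon' n/2}[(\alpha_i|\alpha_j)n]}{n[n]}\tfrac{q^{n}-q^{-n}}{2n(q-q^{-1})}(w/z)^n\Big),
\end{equation*}
which I would resum into a rational function in $w/z$; the crucial point is that only odd powers appear, so the resulting function is built from $G_{ij}$ and its "odd part," matching the functions that enter (2.10)--(2.14). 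I expect the sums to telescope to precisely the expansions of $G_{ij}(q^{\mp c/2}z/w)^{\pm1}$ and $G_{ij}(q^{-c}z/w)/G_{ij}(q^{c}z/w)$ once $c=1$.

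Next I would treat the relations in three groups. (i) The central and Heisenberg relations (2.4), (2.11), and $\gamma\mapsto q$ central, are immediate from the structure of $U_q(\widetilde h)$ and the definition of the action. (ii) The $\phi$--$x$ and $\psi$--$x$ relations \eqref{commutator1}, \eqref{commutator2} follow directly from the contraction computation above, since $\phi_i(z)$ and $\psi_i(z)$ are (up to the scalar $2$ in the exponent, which produces the shift exactly matching the normalization in $E_\pm^\pm$) the generating series of the $E_\pm$ factors; one also needs the cocycle identity $a_i a_j = (-1)^{(\alpha_i|\alpha_j)} a_j a_i$ in $\hat Q$ to move the group-algebra elements past each other, but since $\phi_i,\psi_i$ carry no $\hat Q$-component this is trivial here. (iii) The $x^+$--$x^-$ relations (2.5)--(2.7): writing $X_i^+(z)X_j^-(w)$ and $X_j^-(w)X_i^+(z)$ as normal-ordered products times their contraction functions, the two contraction functions differ by the rational function $f_{ij}(w/z)$ whose zeros/poles are dictated by $(\alpha_i|\alpha_j)$. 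When $(\alpha_i|\alpha_j)=0$ the functions coincide and the bracket vanishes; when $(\alpha_i|\alpha_j)=-1$ the functions agree after multiplying by $(z+w)$, giving (2.6); and for $i=j$ the difference of the two expansions of the pole at $w/z=q^{\pm1}$ produces the $\delta$-function terms, with the $\hat Q$-cocycle $a_i a_i^{-1}$ contributing a sign that I expect combines with the residue to yield exactly the coefficient $\tfrac{2(q+q^{-1})}{q-q^{-1}}$ and the series $\psi_i(q^{-1/2}z)$, $\phi_i(q^{1/2}z)$ in (2.7). The residue bookkeeping here — tracking the $q^{\mp n/2}$ shifts through the BCH sum so that the delta-function arguments come out as $q^{\pm c}$ with $c=1$ — is the step most prone to sign and normalization errors.

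The main obstacle will be the Serre relations (2.8) (and, if one wants full generality, (2.9), though for $\mathfrak g$ simply laced only the $k=1$ case (2.8) is needed). The standard strategy is: normal-order the triple product $x_i^\pm(z_1)x_i^\pm(z_2)x_j^\pm(w)$ so that it equals a single normal-ordered series $:\!E\cdots\!:$ times the product of pairwise contraction functions $\prod(z_a - z_b \text{-type factors})$; the prefactor $(z_1+q^{\mp2}z_2)(z_2-q^{\mp2}z_1)$ in (2.8) is engineered to clear exactly the denominators coming from the $X_i$--$X_i$ and $X_i$--$X_j$ contractions, leaving a Laurent polynomial coefficient. One then shows that after symmetrizing in $z_1,z_2$ and forming the specific alternating-in-$w$ combination $z_2(\cdots) - (z_1+z_2)(\cdots) + z_1(\cdots)$, this polynomial vanishes identically — essentially because the symmetrized numerator acquires enough vanishing at the loci $z_1 = -q^{\mp2}z_2$, $z_1 = q^{\mp2}z_2$, $z_1=w$, $z_2=w$ forced by the contraction zeros to be divisible by a factor that the $Sym$ kills. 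Concretely I would factor out the common normal-ordered vertex operator (which is symmetric in $z_1,z_2$ already, since the $a_i(-n)$ all commute), reduce to a scalar identity among rational functions, and then verify that identity; following the pattern of \cite{J2}, I anticipate it reduces to a Hall--Littlewood-type polynomial identity in two variables that can be checked by a short direct computation. Care is needed that the principal grading (only odd $n$) changes the contraction functions from the homogeneous case, so the relevant combinatorial identity is the "odd" variant; confirming that the prefactors $(z_1+q^{\mp2}z_2)(z_2-q^{\mp2}z_1)$ are the correct clearing factors for this odd variant is where I would focus the verification.
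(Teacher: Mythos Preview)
Your proposal is correct and follows essentially the same route as the paper: compute the OPEs of $X_i^{\pm}(z)$ via the Heisenberg commutator (these are the paper's (3.14)--(3.17)), read off the $\phi,\psi$ and mixed relations from the contraction functions (the paper defers this to Lemma~1, quoted from \cite{J3}), and reduce the Serre relation (2.8) to a scalar polynomial identity after normal-ordering and clearing denominators with the prefactor $(z_1+q^{\mp2}z_2)(z_2-q^{\mp2}z_1)$. The paper verifies that polynomial identity exactly as you suggest---by direct computation, extracting coefficients of powers of $w$ (its Lemma~1 in Section~4)---and for completeness also treats the $k\geq 2$ Serre relation (2.9) via an antisymmetrization/transposition-symmetry trick, though as you note this is not needed for simply laced $\mathfrak g$.
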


We will prove the theorem in this and the next section. To compute the operator product expansion we need the following $q$-analogs of series $(z-w)^{r}$ introduced in \cite{J1, J2}. For $r\in\mathbb{C}$, we call
the following $q$-analogs:
\begin{equation}
(a; q)_{\infty}=\Pi_{n=0}^{\infty}(1-aq^{n}),
\end{equation}
\begin{equation}
(1-z)_{q^{2}}^{r}=\frac{(q^{-r+1}z; q^{2})_{\infty}}{(q^{r+1}z; q^{2})_{\infty}}=exp(-\sum_{n=1}^{\infty}\frac{[rn]}{n[n]}z^{n}).
\end{equation}
Similarly the twisted q-analog is defined by \cite{J3}:
\begin{equation}
(\frac{1-z}{1+z})_{q^{2}}^{r}=\frac{(1-z)_{q^{2}}^{r}}{(1+z)_{q^{2}}^{r}}=exp(-\sum_{n\in 2\mathbb{N}-1}\frac{2[rn]}{n[n]}z^{n}).
\end{equation}

The operator product expansions (OPE) for $X_{i}^{\pm}$(z) are given by
\begin{eqnarray}\label{A}
X_{i}^{\pm}(z)X_{j}^{\pm}(w)=\!\!\!\!&&\!\!\!\!:X_{i}^{\pm}(z)X_{j}^{\pm}(w):
\left(\frac{1-q^{\mp1}w/z}{1+q^{\mp1}w/z}\right)_{q^{2}}^{(\alpha_{i}|\alpha_{j})},
\\X_{i}^{\pm}(z)X_{j}^{\mp}(w)=\!\!\!\!&&\!\!\!\!:X_{i}^{\pm}(z)X_{j}^{\mp}(w):
\left(\frac{1+w/z}{1-w/z}\right)_{q^{2}}^{(\alpha_{i}|\alpha_{j})}.
\end{eqnarray}

In particular, when $(\alpha_{i}|\alpha_{j})=-1$,
\begin{eqnarray}
X_{i}^{\pm}(z)X_{j}^{\pm}(w)=\!\!\!\!&&\!\!\!\!:X_{i}^{\pm}(z)X_{j}^{\pm}(w):
\frac{z+q^{\mp1}w}{z-q^{\mp1}w},
\\X_{i}^{\pm}(z)X_{j}^{\mp}(w)=\!\!\!\!&&\!\!\!\!:X_{i}^{\pm}(z)X_{j}^{\mp}(w):
\frac{z-w}{z+w} .
\end{eqnarray}

\begin{lem}\cite{J3} The operators ~$\phi_{i}(zq^{-1/2})=:X_{i}^{+}(zq^{-1})X_{i}^{-}(z):$  ~and ~$\psi_{i}(zq^{1/2})=\\:X_{i}^{+}(zq)X_{i}^{-}(z):$  are given by
\begin{equation}
\phi_{i}(z)=\exp\big\{(q^{-1}-q) 2\sum_{m\in 2\mathbb{N}-1} h_{i, -m}z^{m}\big\}=\sum_{n\geqslant 0}\phi_{i,-n}z^{n},
\end{equation}
\begin{equation}
\psi_{i}(z)=\exp\big\{(q- q^{-1}) 2\sum_{m\in 2\mathbb{N}-1}h_{i, m}z^{-m}\big\}=\sum_{n\geqslant 0}\psi_{i,n}z^{-n}.
\end{equation}
\end{lem}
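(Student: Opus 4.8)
The plan is to verify both formulas by directly expanding the two normal-ordered products, using only the definition $X_i^{\pm}(z)=E_-^{\pm}(\alpha_i,z)E_+^{\pm}(\alpha_i,z)a_i^{\pm1}$ together with the structure of the twisted Heisenberg algebra $U_q(\widetilde h)$, namely that the modes $a_i(-n)$ with $n\in 2\mathbb{N}-1$ commute among themselves and likewise the modes $a_i(n)$. First I would note that in $:X_i^{+}(zq^{-1})X_i^{-}(z):$ the lattice contribution is just $a_ia_i^{-1}=1$ (no cocycle appears, since these are literally inverse elements of $\hat{Q}$), and that normal ordering --- which inserts no contraction term --- rearranges the four exponential factors into $E_-^{+}(\alpha_i,zq^{-1})\,E_-^{-}(\alpha_i,z)\,E_+^{+}(\alpha_i,zq^{-1})\,E_+^{-}(\alpha_i,z)$. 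Since same-sign modes commute, each adjacent pair merges into a single exponential whose exponent is the sum of the two given exponents.

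The second step is the coefficient bookkeeping. In the $E_+$-pair, replacing $z$ by $zq^{-1}$ multiplies the $n$-th term of $E_+^{+}$ by $q^{n}$, turning its prefactor $q^{-n/2}$ into $q^{n/2}$ and so exactly cancelling the corresponding term of $E_+^{-}$; hence the $E_+$-pair collapses to the identity. For the $E_-$-pair the $n$-th exponent is proportional to $q^{-3n/2}-q^{n/2}=-q^{-n/2}(q^n-q^{-n})=-q^{-n/2}[n](q-q^{-1})$, so the $1/[n]$ denominator cancels and one is left with $\exp\!\big(-2(q-q^{-1})\sum_{n\in 2\mathbb{N}-1}q^{-n/2}a_i(-n)z^n\big)$. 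Substituting $a_i(-n)=h_{i,-n}$ and using $2(q^{-1}-q)\sum_m h_{i,-m}(zq^{-1/2})^m=-2(q-q^{-1})\sum_m q^{-m/2}h_{i,-m}z^m$, this is exactly $\phi_i(zq^{-1/2})$ (the expansion $\phi_i(z)=\sum_{n\geqslant0}\phi_{i,-n}z^n$ then just names its coefficients). The computation for $\psi_i$ is the mirror image: with the shift $z\mapsto zq$ the $E_-$-pair becomes trivial while the $E_+$-pair yields $\exp\!\big(2(q-q^{-1})\sum_{n\in 2\mathbb{N}-1}q^{-n/2}a_i(n)z^{-n}\big)=\psi_i(zq^{1/2})$.

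Everything thus reduces to multiplying commuting exponentials and invoking the single identity $q^n-q^{-n}=[n](q-q^{-1})$, so there is no substantial obstacle; the only point demanding care is tracking the signs and the half-integer powers of $q$ after the arguments have been rescaled, and observing in each of the two cases which pair of exponentials survives. (The statement is due to \cite{J3}; we record the computation both for completeness and because these normal-ordering conventions, together with the resulting formulas for $\phi_i$ and $\psi_i$, will be used repeatedly in Section~4.)
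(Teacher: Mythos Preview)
Your computation is correct; the bookkeeping of the half-integer powers of $q$ and the cancellation/survival of the $E_\pm$-pairs is exactly right, and the identity $q^{n}-q^{-n}=[n](q-q^{-1})$ is indeed all that is needed once the exponents are combined. Note, however, that the paper does not give its own proof of this lemma: it is quoted from \cite{J3} and used as input, so there is nothing to compare your argument against beyond the observation that your direct expansion is precisely the verification one would perform (and presumably the one in \cite{J3}).
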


\section{Twisted quantum Serre relations}

Now we use the vertex representation and quantum vertex operators to prove that the Serre relations are
satisfied by our representation. In the following we treat the ``$+$''-case, as the ``$-$''-case can be proved similarly.

\begin{pro}\label{21}If $(\alpha_{i}|\alpha_{j})= -1$, the ``$+$''-Serre relation can be written as:
\begin{eqnarray}\label{marker}
\begin{split}
Sym_{z_{1},z_{2}}\Big\{(z_{1}+q^{-2}z_{2})(z_{2}-q^{-2}z_{1})\Big(z_{2}X_i^{+} (z_1 )X_i^{+}(z_2 )X_j^{+}(w)\\-(z_{1}+z_{2})X_i^{+}(z_1 )X_j^{+}(w)X_i^{+}(z_2 )+z_{1}X_j^{+}(w)X_i^{+}(z_1 )X_i^{+}(z_2 )\Big)\Big\}=0.
\end{split}
\end{eqnarray}
\end{pro}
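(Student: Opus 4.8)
The plan is to verify \eqref{marker} inside the Fock representation $V_Q$ by means of the operator product expansions of Section~3. The first step is to bring the three triple products in \eqref{marker} to a common normal-ordered form: applying the OPE \eqref{A} for the index pairs $(i,i)$ and $(i,j)$, together with the cocycle relation $a_ia_j=(-1)^{(\alpha_i|\alpha_j)}a_ja_i=-a_ja_i$ each time $X_j^+(w)$ is moved past a factor $X_i^+$, one writes
\[
X_i^+(z_1)X_i^+(z_2)X_j^+(w)=\ :X_i^+(z_1)X_i^+(z_2)X_j^+(w):\;A(z_1,z_2)\,B(z_1,w)\,B(z_2,w)
\]
and similarly for $X_i^+(z_1)X_j^+(w)X_i^+(z_2)$ and $X_j^+(w)X_i^+(z_1)X_i^+(z_2)$, where $A$ denotes the $(i,i)$-contraction function and $B$ (respectively its order-reversed partner, obtained from the OPE with $X_j^+(w)$ on the left) denotes the $(i,j)$-contraction, each expanded in the domain prescribed by the given operator ordering. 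Since both the bosonic factor and the lattice factor $a_ia_ia_j$ of $:X_i^+(z_1)X_i^+(z_2)X_j^+(w):$ are symmetric in $z_1,z_2$, this normal-ordered operator is common to all three terms and factors out, the signs $(-1)^{(\alpha_i|\alpha_j)}$ being absorbed into the coefficients $z_2,\,-(z_1+z_2),\,z_1$.

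Because $(\alpha_i|\alpha_i)=2$, the $(i,i)$-contraction is rational: using $(1\mp z)_{q^2}^2=(1\mp q^{-1}z)(1\mp qz)$ one obtains $A(z_1,z_2)=\bigl(\tfrac{1-q^{-1}z_2/z_1}{1+q^{-1}z_2/z_1}\bigr)_{q^2}^2=\dfrac{(z_1-z_2)(z_1-q^{-2}z_2)}{(z_1+z_2)(z_1+q^{-2}z_2)}$, while for $(\alpha_i|\alpha_j)=-1$ the $(i,j)$-contractions are the functions $\tfrac{z+q^{-1}w}{z-q^{-1}w}$ recorded above. The decisive point is that the prefactor $(z_1+q^{-2}z_2)(z_2-q^{-2}z_1)$ of \eqref{marker} exactly cancels the factor $z_1+q^{-2}z_2$ in the denominator of $A$, leaving the overall scalar $f(z_1,z_2)=(z_1-z_2)(z_1-q^{-2}z_2)(z_2-q^{-2}z_1)/(z_1+z_2)$, which is \emph{antisymmetric} under $z_1\leftrightarrow z_2$. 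Combining the antisymmetry of $f$ with the symmetry of $:X_i^+(z_1)X_i^+(z_2)X_j^+(w):$, the symmetrization collapses the three-term bracket to $f(z_1,z_2)$ times one explicit combination of the functions $\tfrac{z_k\pm q^{-1}w}{z_k\mp q^{-1}w}$, and the statement reduces to the vanishing of that scalar expression.

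The main obstacle is that the three triple products live in three distinct adjacent nested domains, $|z_1|>|z_2|>|w|$, $|z_1|>|w|>|z_2|$ and $|w|>|z_1|>|z_2|$, so what has to be checked is not literally an equality of rational functions but an equality of their expansions across the walls $|z_k|=|w|$ (and, after symmetrizing, $|z_1|=|z_2|$). The heart of the proof is therefore to track the formal-$\delta$-function discrepancies at these walls and show that they cancel in the precise linear combination with coefficients $z_2,\,-(z_1+z_2),\,z_1$; equivalently, that the poles at $z_k=q^{\pm1}w$ — and the pole at $z_1=-z_2$ produced by $f$ — all disappear from the symmetrized sum. It is exactly this cancellation that forces the particular shape of the Serre relation. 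Once the terms are rearranged so that no pole survives at any wall, what is left is a finite Laurent-polynomial identity in $z_1,z_2,w$, which is checked by direct expansion — the counterpart, for $(\alpha_i|\alpha_j)=-1$, of the Hall-Littlewood identities that govern the higher-rank Serre relations in \cite{J2}.
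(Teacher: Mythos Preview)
Your outline matches the paper's in its first half: both use the OPE \eqref{A} to pull out the common normal-ordered product $:X_i^+(z_1)X_i^+(z_2)X_j^+(w):$, both observe that the prefactor $(z_1+q^{-2}z_2)(z_2-q^{-2}z_1)$ cancels against the denominator of the $(i,i)$-contraction to leave the antisymmetric factor $f(z_1,z_2)=\dfrac{(z_1-z_2)(z_1-q^{-2}z_2)(z_2-q^{-2}z_1)}{z_1+z_2}$, and both reduce \eqref{marker} to a scalar identity. Where you diverge is in what you call ``the heart of the proof.'' You frame the remaining work as tracking formal $\delta$-function discrepancies across the walls $|z_k|=|w|$ and $|z_1|=|z_2|$, and checking that the poles at $z_k=q^{\pm1}w$ and $z_1=-z_2$ all cancel after symmetrization. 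The paper does something more direct: it simply clears the four $w$-denominators $(z_k-q^{-1}w)(w-q^{-1}z_k)$, obtaining a polynomial in $z_1,z_2,w$ of degree five, and then (Lemma~1) verifies coefficient-by-coefficient in $w^0,w^1,w^2,w^3,w^4$ that this polynomial, multiplied by $(z_1-z_2)$ and symmetrized, is identically zero. No wall-crossing analysis is performed; the polynomial vanishing is the entire computational content.

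Your approach is not wrong, but it is heavier than necessary and you do not actually carry it out---``checked by direct expansion'' is where the paper's Lemma~1 lives, and that is the step that requires work. The paper's shortcut is legitimate here because multiplying each of the three terms by the \emph{polynomial} $(z_1-q^{-1}w)(z_2-q^{-1}w)(w-q^{-1}z_1)(w-q^{-1}z_2)$ converts every contraction into a polynomial regardless of the expansion domain, so once the numerator polynomial vanishes there is nothing left to expand. Your remark that the pole at $z_1=-z_2$ ``must disappear'' is slightly off: that factor $1/(z_1+z_2)$ is symmetric and simply sits outside---it is the numerator that vanishes, not the pole that cancels.
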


\begin{proof}
When $(\alpha_{i}|\alpha_{j})= -1$, the OPEs are
\begin{align*}
X_{i}^{+}(z_{1})X_{j}^{+}(w)&=:X_{i}^{+}(z_{1})X_{j}^{+}(w):\frac{z+q^{-1}w}{z-q^{-1}w},\\
X_{i}^{+}(z_{1})X_{i}^{+}(z_{2})&=:X_{i}^{+}(z_{1})X_{i}^{+}(z_{2}):\frac{z_{1}-z_{2}}{z_{1}+z_{2}}
\cdot\frac{z_{1}-q^{-2}z_{2}}{z_{1}+q^{-2}z_{2}}.
\end{align*}
Thus the bracket inside the left-hand side of Eq. (\ref{marker}) is simplified as
\begin{eqnarray*}
(z_{1}+\!\!\!\!&&\!\!\!\!\!\!q^{-2}z_{2})(z_{2}-q^{-2}z_{1})\Big(z_{2}X_i^{+} (z_1 )X_i^{+} (z_2 )X_j^{+} (w)
\\-(z_{1}+\!\!\!\!\!\!&&\!\!\!\!z_{2})X_i^{+} (z_1 )X_j^{+} (w)X_i^{+} (z_2 )+z_{1}X_j^{+} (w)X_i^{+} (z_1 )X_i^{+}(z_2 )\Big)
\\=\!\!\!\!&&\!\!\!\!:X_i^{+} (z_1 )X_i^{+} (z_2 )X_j^{+} (w):\frac{(z_{1}-z_{2})(z_{1}-q^{-2}z_{2})(z_{2}-q^{-2}z_{1})}{z_{1}+z_{2}}
\\\!\!\!\!&&\!\!\!\!\Big( z_{2}\cdot\frac{z_{1}+q^{-1}w}{z_{1}-q^{-1}w}\cdot\frac{z_{2}+q^{-1}w}{z_{2}-q^{-1}w}
+(z_{1}+z_{2})
\\\!\!\!\!&&\!\!\!\!\cdot \frac{z_{1}+q^{-1}w}{z_{1}-q^{-1}w}\cdot\frac{w+q^{-1}z_{2}}{w-q^{-1}z_{2}}+z_{1}\frac{w+q^{-1}z_{1}}{w-q^{-1}z_{1}}\cdot\frac{w+q^{-1}z_{2}}{w-q^{-1}z_{2}}\Big)
\\=\!\!\!\!&&\!\!\!\!:X_i^{+} (z_1 )X_i^{+} (z_2 )X_j^{+} (w):\frac{z_{1}-z_{2}}{z_{1}+z_{2}}\prod_{i=1}^{2}\frac{(w-q^{-1}z_{i})^{-1}}{(z_{i}-q^{-1}w)}
\\\!\!\!\!&&\!\!\!\!\cdot \Big\{z_{2}(z_{1}+q^{-1}w)(z_{2}+q^{-1}w)(w-q^{-1}z_{1})(w-q^{-1}z_{2})
\\\!\!\!\!&&\!\!\!\!+(z_{1}+z_{2})(z_{1}+q^{-1}w)(z_{2}-q^{-1}w)(w-q^{-1}z_{1})(w+q^{-1}z_{2})
\\\!\!\!\!&&\!\!\!\!+z_{1}(z_{1}-q^{-1}w)(z_{2}-q^{-1}w)(w+q^{-1}z_{1})(w+q^{-1}z_{2})\Big\}.
\end{eqnarray*}
The proposition is proved if the following lemma holds. \end{proof}
%\newtheorem{mythm}{Lemma}
%\begin{mythm}
\begin{lem}
Let $\mathfrak{S}_{2}$ act on $z_{1},z_{2}~via~\sigma.z_{i}=z_{\sigma(i)}$. Then
\begin{eqnarray*}
\sum_{\sigma\in\mathfrak{S}_{2}}\sigma.\Bigg\{(z_{1}-z_{2})\Big(z_{2}(z_{1}+q^{-1}w)(z_{2}+q^{-1}w)(w-q^{-1}z_{1})(w-q^{-1}z_{2})
\\+(z_{1}+z_{2})(z_{1}+q^{-1}w)(z_{2}-q^{-1}w)(w-q^{-1}z_{1})(w+q^{-1}z_{2})
\\+z_{1}(z_{1}-q^{-1}w)(z_{2}-q^{-1}w)(w+q^{-1}z_{1})(w+q^{-1}z_{2})\Big)\Bigg\}=0.
\end{eqnarray*}
%\end{mythm}
\end{lem}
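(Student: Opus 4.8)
The plan is to verify this polynomial identity in $z_1,z_2,w$ by direct algebraic manipulation, exploiting the structure of the summand rather than blindly expanding everything. Write the summand as $(z_1-z_2)P(z_1,z_2,w)$, where $P=A+B+C$ with $A$, $B$, $C$ the three displayed products. Since we apply $\mathrm{Sym}_{z_1,z_2}$ (equivalently $\sum_{\sigma\in\mathfrak{S}_2}\sigma$) and the prefactor $(z_1-z_2)$ is antisymmetric, the whole expression vanishes if and only if $P(z_1,z_2,w)$ is symmetric in $z_1\leftrightarrow z_2$ up to an antisymmetric piece whose contribution cancels — more precisely, writing $Q(z_1,z_2,w)=(z_1-z_2)P(z_1,z_2,w)$, the sum $Q(z_1,z_2,w)+Q(z_2,z_1,w)$ is what we must show is zero, i.e. $(z_1-z_2)\bigl(P(z_1,z_2,w)-P(z_2,z_1,w)\bigr)=0$, so it suffices to prove $P$ is \emph{symmetric} under exchange of $z_1$ and $z_2$.

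So the first step is to observe that $A(z_1,z_2,w)=z_2(z_1+q^{-1}w)(z_2+q^{-1}w)(w-q^{-1}z_1)(w-q^{-1}z_2)$ and $C(z_1,z_2,w)=z_1(z_1-q^{-1}w)(z_2-q^{-1}w)(w+q^{-1}z_1)(w+q^{-1}z_2)$ are \emph{not} individually symmetric, but $A(z_2,z_1,w)=z_1(z_2+q^{-1}w)(z_1+q^{-1}w)(w-q^{-1}z_2)(w-q^{-1}z_1)$ and similarly for $C$; then check whether $A(z_1,z_2,w)+C(z_2,z_1,w)$ is symmetric, and whether $B$ is symmetric on its own. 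The middle term $B(z_1,z_2,w)=(z_1+z_2)(z_1+q^{-1}w)(z_2-q^{-1}w)(w-q^{-1}z_1)(w+q^{-1}z_2)$ has the symmetric factor $(z_1+z_2)$ times $(z_1+q^{-1}w)(w+q^{-1}z_2)\cdot(z_2-q^{-1}w)(w-q^{-1}z_1)$; grouping $(z_1+q^{-1}w)(w-q^{-1}z_1)=q^{-1}(q w+z_1)(w-q^{-1}z_1)$ and comparing with the $z_1\leftrightarrow z_2$ image should reveal that $B$ itself is symmetric, or else pairs with the residual non-symmetric part of $A+C$. The cleanest route is to expand each of $A$, $B$, $C$ as polynomials of bidegree $(3,2)$ and $(2,3)$ in $(z_1,z_2)$ (they are degree $5$ total, degree $2$ in $w$), collect $P=A+B+C$ by monomials $z_1^a z_2^b w^c$, and verify directly that the coefficient of $z_1^a z_2^b w^c$ equals that of $z_1^b z_2^a w^c$.

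The main obstacle is purely bookkeeping: each of $A$, $B$, $C$ is a product of five linear factors, so expanding gives up to $32$ terms each, and one must track the powers of $q^{-1}$ carefully (the $w$-degree of each term fixes the $q$-power, since every factor contributing a $w$ contributes a $q^{-1}$, with signs depending on which of the $\pm q^{-1}w$ factors is chosen). I expect the identity to come out as a term-by-term cancellation once $P$ is written in the monomial basis — in particular the coefficient of $z_1^3 z_2^2$ in $P$ should match that of $z_1^2 z_2^3$, and the $w$ and $w^2$ coefficients should each separately be symmetric, since $w$ enters only through the $q$-deformation. There is no conceptual difficulty; the work is to organize the expansion so the symmetry is manifest, for instance by substituting $u=q^{-1}w$ to absorb the $q$-powers and then checking symmetry of a genuinely polynomial expression in $z_1,z_2,u$. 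Once $P(z_1,z_2,w)=P(z_2,z_1,w)$ is established, the antisymmetry of $(z_1-z_2)$ under $\mathfrak{S}_2$ gives the vanishing of the sum, completing the proof of the lemma and hence of Proposition~\ref{21}.
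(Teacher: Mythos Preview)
Your reduction is sound: since $(z_1-z_2)$ is antisymmetric under $\mathfrak S_2$, the symmetrized expression vanishes iff $P=A+B+C$ is symmetric in $z_1,z_2$, and checking that by expansion is exactly the kind of direct polynomial verification the paper carries out. The paper organizes the same computation by powers of $w$ instead---it treats the whole symmetrized expression as a polynomial in $w$ and shows the coefficients of $w^0,w^1,w^3,w^4$ each vanish---so the two approaches differ only in bookkeeping, not in substance.

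One small correction to your plan: each of $A,B,C$ is a product of one $z$-linear factor and four factors linear in $w$, so $P$ has degree~$4$ in $w$, not~$2$ as you wrote; you therefore have five $w$-levels (or equivalently more monomials) to match. This does not affect the strategy, only the amount of expansion you need to track.
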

\begin{proof}
Considering the left-hand side as a polynomial in $w$, we extract the constant term:
\begin{equation*}
\sum \limits_{\sigma  \in \mathfrak{S}_2 }\sigma.(z_{1}-z_{2})\Big(z_{2}q^{-2}z_{1}^{2}z_{2}^{2}+
(z_{1}+z_{2})(-q^{-2}z_{1}^{2}z_{2}^{2})+z_{1}q^{-2}z_{1}^{2}z_{2}^{2}\Big)=0.
\end{equation*}
Similarly the highest coefficient of $w^{4}$ is seen to be zero.
The coefficients of $w$ and $w^3$ are respectively computed as follows.
\begin{align*}
\sum \limits_{\sigma  \in \mathfrak{S}_3 }&\sigma.
(z_{1}-z_{2})\big(z_{2}(z_{1}+z_{2})+(z_{1}+z_{2})(z_{1}-z_{2})-z_{1}(z_{1}+z_{2})
\big)=0,\\
\sum \limits_{\sigma  \in \mathfrak{S}_3 }&\sigma.
(z_{1}-z_{2})\big[(z_{1}+z_{2})((q^{2}+q^{-2})z_{1}z_{2}-(z_{1}+z_{2})^{2})
\\&\qquad+(z_{1}+z_{2})((q^{2}+q^{-2})z_{1}z_{2}+(z_{1}-z_{2})^{2})
\big]=0.
\end{align*}
Thus Lemma 1 is proved.
\end{proof}

%\begin{pro}If $(\alpha_{i}|\alpha_{j})= -2$, the ``$+$''-Serre relation can be written as:
%\begin{eqnarray}\label{42}
%\begin{split}
%Sym_{z_{1},z_{2},z_{3}}\Big\{\prod_{i<j}(z_{i}+q^{-2}z_{j})(z_{j}-q^{-2}z_{i})\Big(X_i ^+(z_1 )X_i^+ (z_2 )X_{i}^+(z_{3})X_j^+ (w)+\\X_i ^+(z_1 )X_i^+ (z_2 )X_j^+ (w)X_{i}^+(z_{3})+X_i ^+(z_1 )X_j ^+(w)X_i ^+(z_2 )X_{i}^+(z_{3})\\+X_j ^+(w)X_i^+ (z_1 )X_i^+ (z_2 )X_{i}^+(z_{3})\Big)\Big\}=0.
%\end{split}
%\end{eqnarray}
%\end{pro}
%\begin{proof}
%When $(\alpha_{i}|\alpha_{j})= -2$, the OPEs are
%\begin{equation*}
%X_{i}^{+}(z_{1})X_{j}^{+}(w)=:X_{i}^{+}(z_{1})X_{j}^{+}(w):\frac{z+w}{z-w}\cdot\frac{z+q^{-2}w}{z-q^{-2}w},
%\end{equation*}
%\begin{equation*}
%X_{i}^{+}(z_{1})X_{i}^{+}(z_{2})=:X_{i}^{+}(z_{1})X_{i}^{+}(z_{2}):\frac{z_{1}-z_{2}}{z_{1}+z_{2}}\cdot\frac{z_{1}-q^{-2}z_{2}}{z_{1}+q^{-2}z_{2}}.
%\end{equation*}
%Considering the Eq.(\ref{42}) as a polynomial in w, one can check the Serre relation
%similarly as in Proposition~\ref{21}.
%\end{proof}

Until now, we have discussed the Serre relation
with $(\alpha_{i}|\alpha_{j})= -1$. Next we consider the general situation when $(\alpha_{i}|\alpha_{j})= -k,~k\in \mathbb{N}, ~k\geqslant2$. The twisted quantum Serre relations are proved generally
as follows.

\begin{pro}\label{32} If $(\alpha_{i}|\alpha_{j})= -k, ~k\in \mathbb{N},~k\geqslant2$, then
\begin{eqnarray}\label{B}
\begin{split}
\sum_{r=0,\sigma \in \mathfrak{S}_{k+1}}^{k+1}\sigma .\Big(\prod_{m<n}(z_{m}+q^{-2}z_{n})(z_{n}-q^{-2}z_{m})X_i^{+} (z_1 )X_i^{+}(z_2 )
\cdots \\ \cdot X_{i}^{+}(z_{r}) X_j^{+} (w)X_i^{+} (z_{r+1})\cdots X_i^{+} (z_{k+1} )\Big)=0,
\end{split}
\end{eqnarray}
where the symmetric group $\mathfrak{S}_{k+1}$ acts on the variables $z_1, \cdots, z_{k+1}$.
\end{pro}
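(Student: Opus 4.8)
The plan is to mimic the structure of the proof of Proposition~\ref{21}: first use the operator product expansions \eqref{A} to move all the vertex operators $X_i^+(z_1),\dots,X_i^+(z_{k+1}),X_j^+(w)$ into a single normal-ordered product, thereby converting the left-hand side of \eqref{B} into $:X_i^+(z_1)\cdots X_i^+(z_{k+1})X_j^+(w):$ times a rational function of $z_1,\dots,z_{k+1},w$, and then show that the rational coefficient, once symmetrized over $\mathfrak S_{k+1}$ and summed over $r$, vanishes. Since $:X_i^+(z_1)\cdots X_i^+(z_{k+1})X_j^+(w):$ is already symmetric in the $z_m$, the whole identity reduces to a purely combinatorial identity of rational functions, exactly as Lemma~1 did in the rank-one instance.

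To carry this out I would first record the contractions. Among the $z$-variables, $X_i^+(z_m)X_i^+(z_n)$ contributes (for $m<n$) a factor $\frac{z_m-z_n}{z_m+z_n}\cdot\frac{z_m-q^{-2}z_n}{z_m+q^{-2}z_n}$ coming from $(\alpha_i|\alpha_i)=2$, which is independent of where $X_j^+(w)$ sits, so it can be pulled out front; crucially, the prefactor $\prod_{m<n}(z_m+q^{-2}z_n)(z_n-q^{-2}z_m)$ in \eqref{B} is designed to clear the denominators $(z_m+q^{-2}z_n)$ and to combine with the numerators. The $w$-dependent contractions depend on the position $r$: for each index $p\le r$ we get $X_i^+(z_p)X_j^+(w)$ producing $\frac{z_p+q^{-1}w}{z_p-q^{-1}w}$, while for each $p>r$ we get $X_j^+(w)X_i^+(z_p)$ producing $\frac{w+q^{-1}z_p}{w-q^{-1}z_p}$, using $(\alpha_i|\alpha_j)=-1$ repeatedly — wait, here $(\alpha_i|\alpha_j)=-k$, so in fact each such contraction carries the $k$-th power, $\left(\frac{1-q^{-1}w/z_p}{1+q^{-1}w/z_p}\right)^{-k}$, i.e. $\left(\frac{z_p+q^{-1}w}{z_p-q^{-1}w}\right)^{k}$ and similarly with $z_p,w$ swapped. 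After multiplying by the prefactor and clearing the common denominator $\prod_p (z_p-q^{-1}w)^k(z_p+q^{-1}w)^{\text{(sign)}}$, each summand becomes a polynomial in $w$ (of degree $\le$ something like $2k$ in $w$ after the dust settles) with coefficients in $\mathbb C[z_1,\dots,z_{k+1}]$, and the claim is that $\sum_{r=0}^{k+1}\sum_{\sigma\in\mathfrak S_{k+1}}\sigma$ annihilates it.

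The reduction I expect to use is the standard ``telescoping in $r$'' trick: the sum over $r$ from $0$ to $k+1$ of the position-$r$ term, after symmetrization, collapses because moving $X_j^+(w)$ past one more $X_i^+(z_{r+1})$ changes the $w$-contraction factor for that single slot from $\frac{w+q^{-1}z}{w-q^{-1}z}$ (its ``after'' form) to $\frac{z+q^{-1}w}{z-q^{-1}w}$ (its ``before'' form), and — modulo the $(\alpha_i|\alpha_i)=2$ contraction already extracted — the identity becomes a one-variable statement in $w$ with parameters, which one checks by comparing coefficients of powers of $w$. Concretely, after symmetrization the bracketed expression is a polynomial in $w$ of bounded degree, and it suffices to check that finitely many coefficients (each a symmetric polynomial in the $z_m$) vanish; by the symmetry these coefficients are forced into a small space, and one verifies the top, bottom and middle coefficients vanish just as in the proof of Lemma~1. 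I would phrase this as: extract the constant term and the leading term in $w$ first (these vanish by an obvious $\sigma$-antisymmetry against the Vandermonde-type factor $\prod(z_m-z_n)$), then handle the intermediate coefficients either by the same antisymmetry argument or by reducing to the $k=1$ identity applied pairwise.

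The hard part will be controlling the bookkeeping for general $k$: unlike the $(\alpha_i|\alpha_j)=-1$ case, the $w$-contractions are $k$-th powers, so the polynomial in $w$ has higher degree and the number of coefficients to check grows with $k$, and the prefactor $\prod_{m<n}(z_m+q^{-2}z_n)(z_n-q^{-2}z_m)$ interacts with the $(\alpha_i|\alpha_i)$-contractions $\prod_{m<n}\frac{z_m-z_n}{z_m+z_n}\cdot\frac{z_m-q^{-2}z_n}{z_m+q^{-2}z_n}$ in a way that must be simplified carefully (the $(z_m+q^{-2}z_n)$ factors cancel, leaving $\prod_{m<n}(z_n-q^{-2}z_m)(z_m-q^{-2}z_n)\cdot\frac{z_m-z_n}{z_m+z_n}$, which is still not polynomial because of $z_m+z_n$ in the denominator). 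I expect one cannot avoid this denominator and must instead argue that after the full symmetrization and $r$-summation the residues at $z_m=-z_n$ cancel, so the expression is genuinely polynomial and then the degree/antisymmetry count finishes it. An alternative, cleaner route — which I would pursue if the direct coefficient comparison gets unwieldy — is to deduce \eqref{B} from the already-established rank-one Serre relation of Proposition~\ref{21} together with relation \eqref{commutator1}--\eqref{commutator2} by an induction on $k$, extracting one $X_i^+$ at a time; but since the paper presents Proposition~\ref{32} as a direct computation, I would follow the OPE-and-combinatorial-identity pattern and isolate the vanishing of the symmetrized rational coefficient as a self-contained lemma, exactly as was done for $k=1$.
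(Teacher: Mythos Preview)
Your overall framework --- reduce via the OPEs to a normal-ordered product times a rational function, then prove the symmetrized rational coefficient vanishes --- matches the paper. But two things go wrong.

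First, the contraction formula for $(\alpha_i|\alpha_j)=-k$ is not a $k$-th power. The factor $\left(\frac{1-q^{-1}w/z}{1+q^{-1}w/z}\right)_{q^2}^{(\alpha_i|\alpha_j)}$ in \eqref{A} is the twisted $q$-analog defined in Section~3, so for $(\alpha_i|\alpha_j)=-k$ one gets a product of $k$ \emph{distinct} $q$-shifted linear factors:
\[
X_i^+(z)X_j^+(w)=:X_i^+(z)X_j^+(w):\,\frac{z+q^{-k}w}{z-q^{-k}w}\cdot\frac{z+q^{-k+2}w}{z-q^{-k+2}w}\cdots\frac{z+q^{k-2}w}{z-q^{k-2}w},
\]
not $\bigl(\frac{z+q^{-1}w}{z-q^{-1}w}\bigr)^k$. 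This changes the structure of the $w$-polynomial you are trying to analyze; the paper packages these factors using the notation $[z,w;k]_{q^2}=(z-w)(z-wq^2)\cdots(z-wq^{2(k-1)})$.

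Second, and more importantly, you are missing the one-line observation that makes the $k\ge 2$ case \emph{easier} than $k=1$, not harder. After pulling out the Vandermonde $\prod_{m<n}(z_m-z_n)$, the identity becomes $\sum_{\sigma\in\mathfrak S_{k+1}}\mathrm{sgn}(\sigma)\,\sigma\cdot f(z_1,\dots,z_{k+1})=0$, where $f$ is the sum over $r=0,\dots,k+1$ of the position-$r$ contraction products. The point is that for $k\ge 2$ there are $k+1\ge 3$ variables $z_1,\dots,z_{k+1}$, so for every value of $r$ at least two of them sit on the \emph{same side} of $X_j^+(w)$ and hence enter the $r$-th summand of $f$ symmetrically. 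Each summand is therefore invariant under some transposition, and the antisymmetrizer kills it outright. No coefficient-by-coefficient check, no telescoping in $r$, and no induction is needed --- which is precisely why the paper's argument for general $k$ is shorter than the $k=1$ case in Lemma~1 (where only two $z$-variables appear and no such pigeonhole is available).
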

\begin{proof} For $(\alpha_{i}|\alpha_{j})= -k, ~k\in \mathbb{N},~k\geqslant2$, from (\ref{A}) it follows that
\begin{equation*}
X_{i}^{+}(z_{1})X_{i}^{+}(z_{2})=:X_{i}^{+}(z_{1})X_{i}^{+}(z_{2}):
\frac{z_{1}-z_{2}}{z_{1}+z_{2}}\cdot\frac{z_{1}-q^{-2}z_{2}}{z_{1}+q^{-2}z_{2}},
\end{equation*}
\begin{equation*}
X_{i}^{+}(z)X_{j}^{+}(w)=:X_{i}^{+}(z)X_{j}^{+}(w):\frac{z+q^{-k}w}{z-q^{-k}w}
\cdot\frac{z+q^{-k+2}w}{z-q^{-k+2}w}\cdots\frac{z+q^{k-2}w}{z-q^{k-2}w}.
\end{equation*}
Let
\begin{equation*}
[z, w; k]_{q^{2}}=(z-w)(z-wq^{2})\cdots(z-wq^{2(k-1)}).
\end{equation*}
Then the left-hand side of (\ref{B}) can be simplified as
\begin{align*}
&\sum_{r=0,\sigma \in \mathfrak{S}_{k+1}}^{k+1}\sigma .\Big(\prod_{m<n}(z_{m}+q^{-2}z_{n})
(z_{n}-q^{-2}z_{m})X_i^{+} (z_1 )\cdots X_{i}^{+}(z_{r})\\
&\qquad\quad X_j^{+} (w)\cdot X_i^{+} (z_{r+1})\cdots X_i^{+} (z_{k+1} )\Big)
\\=&:X_j^{+} (w )\prod_{r=1}^{k+1}X_i^{+} (z_r):
\sum_{\sigma \in \mathfrak{S}_{k+1}}\sigma .\big(\frac{\prod_{m<n}(z_{m}-z_{n})}{\prod_{l=1}^{k+1}(w-q^{-1}z_{l})(z_{l}-q^{-1}w)}\cdot f(z_{1},z_{2},\cdots,z_{k+1})\big)\\
=&:X_j^{+} (w )\prod_{r=1}^{k+1}X_i^{+} (z_r):\frac{\prod_{m<n}(z_{m}-z_{n})}{\prod_{l=1}^{k+1}(w-q^{-1}z_{l})(z_{l}-q^{-1}w)}
\sum_{\sigma \in \mathfrak{S}_{k+1}}sgn(\sigma)\sigma.f(z_{1},z_{2},\cdots,z_{k+1}),
\end{align*}
where we have put (for $k$ even)
\begin{align*}
f&(z_{1},z_{2},\cdots,z_{k+1})\\
&=[w,-z_{1}q^{-k}]_{q^{2}}\cdots[w,-z_{k+1}q^{-k}]_{q^{2}}[z_{1},wq^{-k}]_{q^{2}}\cdots[z_{k+1},wq^{-k}]_{q^{2}}
\\&\quad+[z_{1},-wq^{-k}]_{q^{2}}[w,-z_{2}q^{-k}]_{q^{2}}
\cdots[w,-z_{k+1}q^{-k}]_{q^{2}}\\
&\qquad\qquad\cdot[w,z_{1}q^{-k}]_{q^{2}}[z_{2},wq^{-k}]_{q^{2}}\cdots[z_{k+1},wq^{-k}]_{q^{2}}+\cdots\\
&\quad+[z_{1},-wq^{-k}]_{q^{2}}\cdots[z_{k+1},-wq^{-k}]_{q^{2}}[w,z_{1}q^{-k}]_{q^{2}}\cdots[w,z_{k+1}q^{-k}]_{q^{2}}.
\end{align*}
Here we briefly write
$[z, w; k]_{q^2}$ as $[z, w]_{q^2}$.
Observe that each summand in $f(z_1, \cdots, z_{k+1})$ has at least one symmetry under a transposition.
For example
$$[z_{1},-wq^{-k}]_{q^{2}}[w,-z_{2}q^{-k}]_{q^{2}}\cdots[w,-z_{k+1}q^{-k}]_{q^{2}}
[w,z_{1}q^{-k}]_{q^{2}}[z_{2},wq^{-k}]_{q^{2}}\cdots[z_{k+1},wq^{-k}]_{q^{2}}$$
 is invariant under
switching $z_2$ by $z_3$ when $k\geq 2$. Therefore the antisymmetrizer of this
summand under $\mathfrak S_{k+1}$ is zero. Subsequently $\sum_{\sigma \in \mathfrak{S}_{k+1}}sgn(\sigma)\sigma .f(z_{1},z_{2},\cdots,z_{k+1})=0$, and the Serre relation is proved.
\end{proof}

\bigskip

\centerline{\bf Acknowledgments}

 NJ gratefully acknowledges the support of
Humboldt Foundation, Simons Foundation
grant 198129, and NSFC grant 11271138 during this work.

\bigskip

\bibliographystyle{amsalpha}

\end{document}